\newtheorem{theorem}{Theorem}[section]
\newtheorem{lem}[theorem]{Lemma}
\newtheorem{prop}[theorem]{Proposition}
\newtheorem{cor}[theorem]{Corollary}
\theoremstyle{definition}
\newtheorem{definition}[theorem]{Definition}
\newtheorem{example}[theorem]{Example}
\theoremstyle{remark}
\newtheorem{remark}[theorem]{Remark}
\numberwithin{equation}{section}
\begin{document}

\newcommand{\spacing}[1]{\renewcommand{\baselinestretch}{#1}\large\normalsize}
\spacing{1.14}

\title{Left Invariant Randers Metrics of Berwald type on Tangent Lie Groups}

\author {F. Asgari}

\address{Department of Mathematics\\ Faculty of  Sciences\\ University of Isfahan\\ Isfahan\\ 81746-73441-Iran.} \email{farhad\_13812003@yahoo.com}

\author {H. R. Salimi Moghaddam}

\address{Department of Mathematics\\ Faculty of  Sciences\\ University of Isfahan\\ Isfahan\\ 81746-73441-Iran.} \email{hr.salimi@sci.ui.ac.ir and salimi.moghaddam@gmail.com}

\keywords{Left invariant Finsler metric, Randers metric, complete and vertical lifts, flag curvature\\
AMS 2010 Mathematics Subject Classification: 53B21, 22E60, 22E15.}


\begin{abstract}
Let $G$ be a Lie group equipped with a left invariant Randers metric of Berward type $F$, with underlying left invariant Riemannian metric
$g$. Suppose that $\widetilde{F}$ and $\widetilde{g}$ are lifted Randers and Riemannian metrics arising from $F$ and $g$ on the tangent Lie group $TG$ by vertical and complete lifts. In this article we study the relations between the flag curvature of the Randers manifold $(TG,\widetilde{F})$ and the sectional curvature of the Riemannian manifold $(G,g)$ when $\widetilde{F}$ is of Berwald type. Then we give all simply connected $3$-dimentional Lie groups such that their tangent bundles admit Randers metrics of Berwarld type and their geodesics vectors.
\end{abstract}

\maketitle


\section{\textbf{Introduction}}
Finsler geometry is one of the important subjects in differential geometry
which has been developed in the last century. One of the important reasons for this development is its application in many areas of
natural science such as physics and biology (for more details see \cite{AnInMa} and \cite{Asanov}). An important type of Finsler metrics introduced in general relativity, because of its application, is the family of Randers metrics (see \cite{Randers}). Also, in four-dimensional case, Randers metrics have been used for computing the Lagrangian function of a test electric charge in the electromagnetic and gravitational fields (see \cite{Asanov}, \cite{Ingarden} and \cite{Landau-Lifshitz}).\\
Among the Finsler spaces, the family of Lie groups equipped with invariant Finsler metrics are of special interest to geometers. In the last decade, many geometric properties of such spaces have been studied (for example see \cite{Deng1,Deng2}, \cite{Latifi1,Latifi2} and \cite{Salimi1,Salimi2,Salimi3}).
If $G$ is a Lie group we can easily see its tangent bundle is also a Lie group (see \cite{Hilgert-Neeb}). In this article we study some special types of left invariant Randers metrics on the Lie group $TG$.\\
The study of the Riemannian geometry of tangent bundles started with Sasaki's paper \cite{Sasaki}. He showed that any Riemannian metric $g$ on the base manifold $M$ induces a Riemannian metric on $TM$ by using vertical and horizontal lifts. If we replace the horizontal lift with complete lift then we have another way for constructing Riemannian metrics on $TM$. K. Yano and S. Kobayashi used this way and studied many geometric properties of such lifted metrics (see \cite{YaKo1, YaKo2, YaKo3}). For more results in this field one can see \cite{KowalskiSekizawa}, \cite{Sekizawa} and \cite{YaIs}.\\
In our previous two papers \cite{Asgari-Salimi1,Asgari-Salimi2}, we study the Riemannian geometry of invariant Riemannian metrics induced on $TG$ by using vertical and complete lifts. In this work we study lifted Randers metrics of Berwald type on $TG$. In \cite{Aradi}, it has been shown that any left invariant Finsler metric on a Lie group is a generalized Berwald metric so the Berwaldian condition is not very restrictive. We study the curvature properties of such spaces, then we give all simply connected $3$-dimensional Lie groups such that their tangent bundles admit Randers metrics of Berwarld type.

\section{\textbf{Preliminaries}}
Suppose that $M$ is a $m-$dimensional smooth manifold and $TM$ is its tangent bundle. Let $X$ be an arbitrary vector field on $M$. Then $X$ defines two types of (local) one-parameter group of diffeomorphisms on $TM$ as follows,
\begin{eqnarray}\label{one-parameter group of diffeomorphisms}
  \Phi_t(y)&:=&(T_x\phi_t)(y), \hspace*{3cm} \forall x\in M, \forall y\in T_xM.\\
  \Psi_t(y)&:=&y+tX(x),
\end{eqnarray}
where $\phi_t$ is the flow generated by the vector field $X$ on $M$. The infinitesimal generator of one parameter groups of diffeomorphisms $\Phi_t$ and $\Psi_t$ are called the complete lift (denoted by $X^c$) and vertical lift (denoted by $X^v$) of $X$ respectively. \\
Let $(x^i) (i = 1,2, \cdots , n)$ be a local coordinate system in an open subset $U$ of $M$. Then we denote the induced local coordinate system on $\pi^{-1}(U)$ by $(x^i,y^i) (i = 1,2, \cdots , n)$, where $\pi:TM\longrightarrow M$ is the projection map. Suppose that $X$ is a vector field on $M$ with local representation $X|_U=\sum_{i=1}^m\xi^i\frac{\partial}{\partial x^i}$. Then, the local representation of its vertical and complete lifts on $TM$ are as follows:
\begin{eqnarray}
  (X|_U)^v &=& \sum_{i=1}^m\xi^i\frac{\partial}{\partial y^i}, \\
  (X|_U)^c &=& \sum_{i=1}^m\xi^i\frac{\partial}{\partial x^i}+\sum_{i=1}^m\sum_{j=1}^m\frac{\partial\xi^i}{\partial x^j}y^j\frac{\partial}{\partial y^i}.
\end{eqnarray}
The Lie brackets of vertical and complete lifts of vector fields satisfy the following equations (for more details see \cite{Hind} and \cite{YaIs}),

\begin{eqnarray}\label{Lie bracket}
  \left[X^v, Y^v\right] &=& 0, \nonumber\\
  \left[X^c, Y^c\right] &=& \left[X, Y\right]^c, \\
  \left[X^v, Y^c\right] &=& \left[X, Y\right]^v.\nonumber
\end{eqnarray}
Now let $G$ be a real $m-$dimensional connected Lie group with multiplication map $\mu:G\times G\longrightarrow G$, $(x,y)\longrightarrow xy$
and identity element $e$. Suppose that $l_y:G \longrightarrow G$, $x\longrightarrow yx$  and $r_y:G\longrightarrow G$, $x\longrightarrow xy$ are left and right translations, respectively. Then $TG$ is also a Lie group with multiplication:
\begin{eqnarray}
       T\mu :(v,w)\longrightarrow T_yl_xw+T_xr_yv, \hspace{2cm} v\in T_xG, w\in T_yG
\end{eqnarray}
with identity element $0_e\in T_eG$ and the inversion map $T\iota$, where $\iota$ is the inversion map of $G$ (see \cite{Hilgert-Neeb}).\\
In \cite{Hind}, it is shown that if $X$ is a left invariant vector field on $G$ then $X^c$ and $X^v$ are left invariant vector fields on $TG$. This result together with  the local representation of vertical and complete lifts of vector fields show that if $\{X_1,X_2,...,X_m\}$ is a basis for the Lie algebra $\mathfrak{g}$ of $G$, then $\{X_1^c,...,X_n^c,X_1^v,...,X_m^v\}$ is a basis for the Lie algebra $\widetilde{\mathfrak{g}}$ of $TG$.\\

Now we give some preliminaries from Finsler geometry (see \cite{Bao-Chern-Shen}).
\begin{definition}
A Finsler metric  $F$  on $M$ is a non-negative function
$F:TM \longrightarrow R$ with the following properties:
\begin{itemize}
  \item $F$ is smooth on the slit tangent bundle $TM^0=TM\backslash0$,
  \item $F(x,\lambda y)=\lambda F(x,y)$, for any $x\in M, y\in T_xM$ and $\lambda>0$,
  \item The Hessian matrix $g_{ij}(x,y)=\frac{1}{2}\frac{\partial^2F^2}{\partial y^i\partial y^j}$ is positive definite at every point $(x,y)\in TM^0$.
\end{itemize}
\end{definition}

As we mentioned in introduction, an important class of Finsler metrics is the class of Randers metrics of the form,
\begin{eqnarray}
F(x,y)=\sqrt{g_x(y,y)}+b_i(x)y^i, \hspace{2cm} y\in T_xM,
\end{eqnarray}
where $g$ is a Riemennian metric and $b=b_i(x)$ is a 1-form on $M$ and $\|b\|_g=\sqrt{b_i(x)b^i(x)}<1$ where $b^i(x):=g^{ij}(x)b_j(x)$.

On Lie groups and also homogeneous spaces, it is more convenient to consider a Randers metric as follows (see \cite{Deng1} and \cite{Salimi1}),
\begin{eqnarray}\label{invariant Randers metric formula}
F(x,y)=\sqrt{g_x(y,y)}+g(X(x),y),
\end{eqnarray}
where $X$ is a vector field on $M$ with $\|X\|_g:=\sqrt{g(X,X)}<1$. \\
A Finsler metric $F$ on a Lie group $G$ is called left invariant if,
\begin{eqnarray}
F(x,y)=F(e,T_xl_{x^{-1}}y), \hspace*{2cm}\forall x\in G, \forall y\in T_xG.
\end{eqnarray}
So if $g$ is a left invariant Riemannian metric and $X$ is a left invariant vector field on $G$ with $\|X\|:=g(X,X)<1$, then the Randers metric $F$ with underlying Riemannian metric $g$ and vector field $X$ is a left invariant Finsler metric.

An important quantity in Finsler geometry is the concept of flag curvature with the following formula:
\begin{eqnarray}
K(P,Y)=\frac{g_y(R(u,y)y,u)}{g_y(y,y)g_y(u,u)-g^2_y(y,u)},
\end{eqnarray}
where $g_y(u,v)=\frac{1}{2}\frac{\partial^2}{\partial s \partial t}(F^2(y+su+tv)|_{s=t=0}$ is the fundumental tensor , $P=span\{y,u\}$ is the plan spanned by vectors $y$ and $u$, $R(u,y)y=\nabla_u\nabla_yy-\nabla_y\nabla_uy-\nabla_{[u,y]}y$ is the curvature tensor and $\nabla$ is the chern connection of $F$.\\
In special case if $X$ is parallel with respect to the Levi-Civita connection induced by the Riemannian metric $g$, then $F$ is called of Berwald type. If $F$ is of Berwald type then the Chern connection of $F$ and the Levi-Civita connection of $g$ are coincide (see \cite{Bao-Chern-Shen}).

\section{\textbf{Lifting of Randers metrics on tangent bundles}}

Let $G$ be a Lie group equipped with a left invariant Riemannian metric $g$. In \cite{Asgari-Salimi1}, we defined a natural left invariant metric $\widetilde {g}$ on $TG$ as follows,
\begin{eqnarray}\label{Lift}
\widetilde {g} (X^c,Y^c)&=&g(X,Y), \nonumber \\
\widetilde {g} (X^v,Y^v)&=&g(X,Y), \\
\widetilde {g} (X^c,Y^v)&=&0,\nonumber
\end{eqnarray}
where $X$ and $Y$ are arbitrary vector fields on $G$.
Also we proved the following proposition about its Levi-Civita connection.
\begin{prop}\label{Levi-Civita connection}
If $\widetilde {\nabla}$ is the Levi-Civita connection induced by $\widetilde{g}$ on $TG$ then,
\begin{eqnarray}
\widetilde {\nabla}_{X^c}Y^c&=&(\nabla_XY)^c, \nonumber\\
\widetilde {\nabla}_{X^v}Y^v&=&(\nabla_XY-\frac{1}{2}[X,Y])^c, \\
\widetilde {\nabla}_{X^c}Y^v&=&(\nabla_XY+\frac{1}{2}ad^*_YX)^v, \nonumber\\
\widetilde {\nabla}_{X^v}Y^c&=&(\nabla_XY+\frac{1}{2}ad^*_XY)^v,\nonumber
\end{eqnarray}
where $\nabla$ is the Levi-Civita connection of $g$ and $X, Y$ are any two left invariant vector fields on $G$.
\end{prop}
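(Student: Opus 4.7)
The plan is to derive all four formulas from a single application of the Koszul identity, exploiting the fact that both $\widetilde{g}$ and the lifted vector fields $X^c, Y^c, X^v, Y^v$ are left invariant on $TG$, so pairings $\widetilde{g}(\,\cdot\,,\cdot\,)$ of such fields are constant functions. Consequently the three vector-field-derivative terms in the Koszul formula drop out, leaving the purely algebraic identity
\begin{equation*}
2\widetilde{g}(\widetilde{\nabla}_U V, W) \;=\; \widetilde{g}([U,V],W) \;+\; \widetilde{g}([W,U],V) \;-\; \widetilde{g}([V,W],U),
\end{equation*}
valid whenever $U,V,W\in\{X^c,Y^c,Z^c,X^v,Y^v,Z^v\}$. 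All needed structural data is then packaged in: (a) the bracket relations (1.5), (b) the pairing rules (3.1), and (c) the corresponding Koszul identity on $(G,g)$, namely $2g(\nabla_XY,Z)=g([X,Y],Z)+g([Z,X],Y)-g([Y,Z],X)$, together with the definition of the metric adjoint, $g(ad^{*}_{X}Y,Z)=g(Y,[X,Z])$.

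I would carry out the four cases in turn. In each case I test $\widetilde{\nabla}_{U}V$ by pairing it with $Z^c$ and then $Z^v$; because of the orthogonality in (3.1), one of these two pairings always vanishes by virtue of the Lie bracket rules (1.5), and this alone identifies the lift-type of $\widetilde{\nabla}_{U}V$. For the nonvanishing pairing, I substitute the brackets from (1.5) and recognize the result on the Lie algebra of $G$. Concretely, for $\widetilde{\nabla}_{X^c}Y^c$ the surviving test is against $Z^c$, and the right-hand side of the Koszul identity reduces verbatim to $2g(\nabla_{X}Y,Z)$, giving the complete lift of $\nabla_{X}Y$. For $\widetilde{\nabla}_{X^v}Y^v$ the surviving test is again against $Z^c$ (since $[X^v,Y^v]=0$ kills the $Z^v$ pairing), and one gets $-g([Y,Z],X)+g([X,Z],Y)$, which equals $2g(\nabla_{X}Y-\tfrac12[X,Y],Z)$ after substituting Koszul on $G$. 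The two mixed cases $\widetilde{\nabla}_{X^c}Y^v$ and $\widetilde{\nabla}_{X^v}Y^c$ survive only against $Z^v$, and produce $g([X,Y],Z)+g([X,Z],Y)$ and $g([X,Y],Z)+g([Y,Z],X)$ respectively; rewriting the extra bracket as $g(X,[Y,Z])=g(ad^{*}_{Y}X,Z)$ in the first and as $g(Y,[X,Z])=g(ad^{*}_{X}Y,Z)$ in the second, then applying Koszul on $G$ to the remainder, yields the claimed $\tfrac12\,ad^{*}$-corrections.

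The calculation is essentially a bookkeeping exercise, and the one genuine point of friction will be matching signs and indices on the $ad^{*}$ terms: the Koszul formula on $G$ produces exactly one bracket term with the ``wrong'' argument inside $g$, and the identification of that term as $\tfrac12 g(ad^{*}_{Y}X,Z)$ or $\tfrac12 g(ad^{*}_{X}Y,Z)$ must be done carefully, using the definition $g(ad^{*}_{U}V,W)=g(V,[U,W])$ and noting the asymmetry between the $X^c\cdot Y^v$ and $X^v\cdot Y^c$ cases. Once those two cases are written out symmetrically, the proposition follows; no compatibility or torsion-freeness check beyond Koszul is required, since Koszul on the left-invariant algebra determines $\widetilde{\nabla}$ uniquely.
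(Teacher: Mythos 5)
Your strategy is the right one, and (since the paper itself only cites \cite{Asgari-Salimi1} for this proposition rather than reproving it) it is surely the intended one: for left invariant fields on $TG$ all derivative terms in the Koszul formula vanish, the orthogonality $\widetilde{g}(X^c,Y^v)=0$ together with the bracket relations \eqref{Lie bracket} immediately sorts each $\widetilde{\nabla}_UV$ into complete or vertical type, and the surviving pairing is massaged back into the Koszul formula on $(G,g)$ plus an $ad^\ast$ term. The type identification is also correct in all four cases: a complete--complete or vertical--vertical derivative is of complete type, the two mixed derivatives are of vertical type.

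However, the intermediate expressions you actually display do not reduce to the stated formulas; three of the four carry sign slips traceable to evaluating $[Z^c,X^v]$ as $[X,Z]^v$ instead of $-[X,Z]^v=[Z,X]^v$ (and similarly for $[Y^c,Z^v]$). Concretely, with the convention $[X^v,Y^c]=[X,Y]^v$ and the left invariant Koszul identity $2g(\nabla_XY,Z)=g([X,Y],Z)+g([Z,X],Y)-g([Y,Z],X)$, the correct surviving sums are: for $\widetilde{\nabla}_{X^v}Y^v$ against $Z^c$, $g([Z,X],Y)-g([Y,Z],X)=2g(\nabla_XY,Z)-g([X,Y],Z)$ (you wrote $+g([X,Z],Y)$, which adds a spurious $-2g([Z,X],Y)$); for $\widetilde{\nabla}_{X^c}Y^v$ against $Z^v$, $g([X,Y],Z)+g([Z,X],Y)=2g(\nabla_XY,Z)+g(X,[Y,Z])$, whence the $\tfrac12 ad^\ast_YX$ term; and for $\widetilde{\nabla}_{X^v}Y^c$ against $Z^v$, $g([X,Y],Z)-g([Y,Z],X)=2g(\nabla_XY,Z)+g(Y,[X,Z])$, whence $\tfrac12 ad^\ast_XY$ --- note in particular that in this last case the leftover bracket term is $-g([Z,X],Y)$, not the $g([Y,Z],X)$ you wrote, which would instead give $ad^\ast_YX$ and the wrong formula. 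You flagged the sign bookkeeping as the point of friction, and indeed it is: as written, your case (2) and case (4) computations would not close up to the proposition, so these signs must be fixed before the argument is complete.
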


Let $F$ be a left invariant Randers metric on $G$ defined by \ref{invariant Randers metric formula}, where $g$ and $X$ are
a left invariant Riemannian metric and a left invariant vector field on $G$, respectively. Now, a natural question is: How can we lift such a Randers metric on $TG$ such that the lifted metric be left invariant? \\
The answer is very simple because we have the following Randers metrics,
\begin{eqnarray}
  F^c((x,y),\widetilde{z})&=&\sqrt{\widetilde{g}(\widetilde{z},\widetilde{z})}+\widetilde{g}(X^c(x,y),\widetilde{z}),\label{F^c} \\
  F^v((x,y),\widetilde{z})&=&\sqrt{\widetilde{g}(\widetilde{z},\widetilde{z})}+\widetilde{g}(X^v(x,y),\widetilde{z}),\label{F^v}
\end{eqnarray}
where $x\in G$, $y\in T_xG$ and $\widetilde{z}\in T_{(x,y)}TG$.\\
We can easily see these are left invariant Randers metrics on $TG$ because
\begin{equation}
    \|X^c\|_{\widetilde{g}}=\|X^v\|_{\widetilde{g}}=\|X\|_g<1.
\end{equation}
From now on we suppose that $F$, $F^c$ and $F^v$ are defined as above.\\
The following lemmas give a necessary and sufficient condition for $F^c$ and $F^v$ to be of Berwarld type.

\begin{lem}\label{complete lift}
Suppose that $F$ is an arbitrary left invariant Randers metric defined by \ref{invariant Randers metric formula}, where $g$ is a left invariant Riemannian metric and $X$ is a left invariant vector field on a Lie group $G$. Then, $F^c$ is of Berwald type if and only if $F$ is of Berwald type.
\end{lem}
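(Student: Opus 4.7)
The plan is to translate both Berwald hypotheses into parallelism of the underlying left invariant vector fields with respect to the appropriate Levi-Civita connections, and then to unpack the parallelism of $X^c$ on $TG$ using Proposition~\ref{Levi-Civita connection}. Since $F^c$ is the left invariant Randers metric on $TG$ built from $(\widetilde g, X^c)$, it is of Berwald type iff $\widetilde\nabla X^c = 0$; likewise $F$ is of Berwald type iff $\nabla X = 0$. The Lie algebra $\widetilde{\mathfrak g}$ is spanned by $\{Y^c, Y^v : Y \in \mathfrak g\}$, so testing $\widetilde\nabla X^c$ on this basis and invoking Proposition~\ref{Levi-Civita connection} reduces the parallelism of $X^c$ to the pair of identities $\nabla_Y X = 0$ and $\nabla_Y X + \tfrac12\, ad^*_Y X = 0$ holding for every $Y \in \mathfrak g$, which together are equivalent to $\nabla_Y X = 0$ and $ad^*_Y X = 0$.

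The forward implication is then immediate: Berwald $F^c$ already gives $\nabla_Y X = 0$, which is Berwald $F$. For the converse one has to show that $\nabla X = 0$ alone forces $ad^*_Y X = 0$, equivalently $g(X, [\mathfrak g, \mathfrak g]) = 0$. I would establish this from the Koszul identity for a left invariant metric,
\[
2g(\nabla_U V, W) = g([U, V], W) - g([V, W], U) + g([W, U], V),
\]
by substituting $V = X$ and the hypothesis $\nabla_Y X = 0$ to obtain
\[
g(X, [Z, Y]) = g(Y, [X, Z]) - g(Z, [Y, X]) \qquad \text{for all } Y, Z \in \mathfrak g.
\]
Interchanging $Y$ and $Z$ in this relation and applying antisymmetry of the bracket gives a second identity with the same left-hand side but right-hand side equal to the negative of the original; adding the two forces $2g(X, [Z, Y]) = 0$, hence $g(ad^*_Y X, Z) = g(X, [Y, Z]) = 0$ for every $Z$, so $ad^*_Y X = 0$.

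The main obstacle is this final symmetrisation step: one application of Koszul is not enough, and one has to combine the relations obtained from $\nabla_Y X = 0$ and $\nabla_Z X = 0$ so that the cross terms cancel and leave only $X \perp [\mathfrak g, \mathfrak g]$. Once this algebraic point is observed, the theorem follows by assembling the forward and reverse implications; everything else amounts to bookkeeping with the formulas of Proposition~\ref{Levi-Civita connection}.
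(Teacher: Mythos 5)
Your argument is correct, and its skeleton matches the paper's: both proofs use Proposition~\ref{Levi-Civita connection} to reduce the Berwald condition on $F^c$ to the pair of identities $\nabla_YX=0$ and $\nabla_YX+\tfrac12\,ad^*_YX=0$ for all $Y\in\mathfrak{g}$, so that the forward implication is immediate and the entire content of the converse is the implication $\nabla X=0\Rightarrow ad^*_YX=0$, i.e.\ $g(X,[\mathfrak{g},\mathfrak{g}])=0$. Where you genuinely diverge is in how that implication is obtained. The paper simply cites Theorem~3.1 of \cite{Deng3}, which supplies $g(ad^*_YX,Z)=g(X,[Y,Z])=0$ directly from the Berwald hypothesis on $F$. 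You instead derive it from the Koszul formula for a left invariant metric: substituting $\nabla_YX=0$ gives $g(X,[Z,Y])=g(Y,[X,Z])-g(Z,[Y,X])$, whose right-hand side is symmetric under $Y\leftrightarrow Z$ while the left-hand side is antisymmetric, so both sides vanish. Your symmetrisation step checks out (interchanging $Y$ and $Z$ and using antisymmetry of the bracket does flip the sign of exactly one side), and it yields a self-contained, elementary proof of the key fact at the cost of a few lines of computation; the paper's route buys brevity by outsourcing this to the An--Deng characterisation of invariant Berwald--Randers metrics. Note only that both your reading of ``Berwald type'' as parallelism of $X$ and the reduction to the basis $\{Y^c,Y^v\}$ are exactly the conventions the paper adopts, so no additional justification is needed there.
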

\begin{proof}
Let $F^c$ be of Berwald type, so for any $Y\in\frak{g}$ we have $\widetilde {\nabla}_{Y^c}X^c=\widetilde {\nabla}_{Y^v}X^c=0$. Now, the proposition \ref{Levi-Civita connection} shows that for any $Y\in\frak{g}$ we have $\nabla_YX=0$ which means that $F$ is of Berwald type. Now let $F$ be of Berwald type. By using theorem 3.1 of \cite{Deng3} we have
\begin{equation*}
    g(ad_Y^\ast X,Z)=g(X,[Y,Z])=0, \ \ \ \ \ \forall Y,Z\in\frak{g}.
\end{equation*}
The last relation shows that, for any $Y\in\frak{g}$, $ad_Y^{\ast}X=0$. Now proposition \ref{Levi-Civita connection} proves that $\widetilde {\nabla}_{Y^c}X^c=\widetilde {\nabla}_{Y^v}X^c=0$, which means that $F^c$ is of Berwald type.
\end{proof}

\begin{lem}\label{vertical lift}
With the assumptions of the previous lemma, $F^v$ is of Berwald type if and only if $ad^\ast_X=ad_X$ and $\nabla_XY=\frac{1}{2}[X,Y]$.
\end{lem}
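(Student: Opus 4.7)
The plan is to unwind ``Berwald type'' for the Randers metric $F^v$ into a parallelism condition on its defining vector field, and then apply Proposition \ref{Levi-Civita connection} directly.

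First I would note that $F^v$ is built from the Riemannian metric $\widetilde g$ and the left invariant vector field $X^v$ via formula (\ref{F^v}). By the Berwald characterization recalled at the end of Section~2, $F^v$ is of Berwald type if and only if $X^v$ is parallel with respect to the Levi-Civita connection $\widetilde\nabla$ of $\widetilde g$; and since $\{Y^c,Y^v:Y\in\mathfrak g\}$ spans $\widetilde{\mathfrak g}$, this is equivalent to
\begin{equation*}
\widetilde\nabla_{Y^c}X^v=0\quad\text{and}\quad\widetilde\nabla_{Y^v}X^v=0\qquad\text{for every }Y\in\mathfrak g.
\end{equation*}

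Next I would substitute the formulas from Proposition \ref{Levi-Civita connection} (taking the second vector field to be $X$) to rewrite these two conditions, respectively, as
\begin{equation*}
\bigl(\nabla_Y X+\tfrac12 ad^{\ast}_X Y\bigr)^{v}=0
\qquad\text{and}\qquad
\bigl(\nabla_Y X-\tfrac12[Y,X]\bigr)^{c}=0.
\end{equation*}
Since the vertical and complete lift maps are injective on $\mathfrak g$, this amounts to the two scalar identities $\nabla_Y X=\tfrac12[Y,X]$ and $\nabla_Y X=-\tfrac12 ad^{\ast}_X Y$ holding for every $Y\in\mathfrak g$.

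For the forward direction, combining these two equalities gives $[Y,X]=-ad^{\ast}_X Y$, i.e.\ $ad^{\ast}_X Y=[X,Y]=ad_X Y$ for every $Y$, which is the first condition $ad^{\ast}_X=ad_X$. The remaining equality $\nabla_Y X=\tfrac12[Y,X]$ can be turned into the stated form by using the torsion-free property $\nabla_X Y-\nabla_Y X=[X,Y]$, which yields $\nabla_X Y=\tfrac12[X,Y]$. For the converse, starting from $ad^{\ast}_X=ad_X$ and $\nabla_X Y=\tfrac12[X,Y]$, the torsion identity gives back $\nabla_Y X=\tfrac12[Y,X]$, and then $-\tfrac12 ad^{\ast}_X Y=-\tfrac12[X,Y]=\tfrac12[Y,X]=\nabla_Y X$, so both equations from the previous paragraph are satisfied and $X^v$ is parallel.

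The only subtle point, and really the only thing to be careful with, is bookkeeping: the proposition lists $\widetilde\nabla_{X^c}Y^v$ and $\widetilde\nabla_{X^v}Y^v$, so one must swap arguments correctly to compute $\widetilde\nabla_{Y^c}X^v$ and $\widetilde\nabla_{Y^v}X^v$, and then use the antisymmetry $[Y,X]=-[X,Y]$ together with the torsion-free identity to reconcile the form $\nabla_Y X=\tfrac12[Y,X]$ obtained from the calculation with the form $\nabla_X Y=\tfrac12[X,Y]$ appearing in the lemma statement. Once this sign/argument bookkeeping is clear, the equivalence follows immediately from Proposition \ref{Levi-Civita connection} with no further computation.
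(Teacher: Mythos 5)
Your proof is correct and follows the same route as the paper: the paper's own proof is just the observation that $F^v$ is Berwald iff $\widetilde{\nabla}_{Y^c}X^v=\widetilde{\nabla}_{Y^v}X^v=0$ together with an appeal to Proposition \ref{Levi-Civita connection}. You simply fill in the argument-swapping and the torsion-free bookkeeping that the paper leaves implicit, and these details check out.
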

\begin{proof}
$F^v$ is of Berwald type if and only if $\widetilde{\nabla}_{Y^c}X^v=\widetilde {\nabla}_{Y^v}X^v=0$. Now it is sufficient to use proposition \ref{Levi-Civita connection}.
\end{proof}
The following corollary is an immediate consequence of the previous lemma and proposition \ref{Levi-Civita connection}.
\begin{cor}\label{vertical lift with Berwald condition}
In the previous lemma if we consider $F$ is of Berwald type then, $F^v$ is of Berwald type if and only if $X\in z(\frak{g})$, where $z(\frak{g})$ denotes the center of $\frak{g}$.
\end{cor}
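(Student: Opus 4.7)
The plan is to combine the two Berwald criteria from Lemma \ref{vertical lift} with the Berwald characterization of $F$ via $\nabla_Y X = 0$ that was isolated in the proof of Lemma \ref{complete lift}. Specifically, since $F$ is assumed to be of Berwald type, we know $\nabla_Y X = 0$ for every $Y\in\mathfrak{g}$. The torsion-free property of the Levi-Civita connection $\nabla$ then gives
\begin{equation*}
\nabla_X Y \;=\; \nabla_Y X + [X,Y] \;=\; [X,Y], \qquad \forall\, Y\in\mathfrak{g}.
\end{equation*}

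For the forward direction, I would start from the assumption that $F^v$ is of Berwald type and invoke Lemma \ref{vertical lift}, which supplies $\nabla_X Y = \tfrac{1}{2}[X,Y]$. Substituting the identity just derived yields $[X,Y] = \tfrac{1}{2}[X,Y]$, hence $[X,Y] = 0$ for every $Y\in\mathfrak{g}$, so $X\in z(\mathfrak{g})$.

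For the converse, assume $X\in z(\mathfrak{g})$. Then $ad_X \equiv 0$, and using the definition of the coadjoint $g(ad^*_X Y,Z) = g(Y,[X,Z]) = 0$ for all $Y,Z$, so $ad^*_X = 0 = ad_X$, verifying the first condition of Lemma \ref{vertical lift}. The second condition $\nabla_X Y = \tfrac{1}{2}[X,Y]$ reduces to $\nabla_X Y = 0$, which follows immediately from our torsion identity $\nabla_X Y = [X,Y] = 0$. Both hypotheses of Lemma \ref{vertical lift} then hold, so $F^v$ is of Berwald type.

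There is no real obstacle here: once the torsion-free step is in place, the corollary is an algebraic matching of the Berwald conditions on $X$. The only point requiring a little care is remembering that being of Berwald type for a left invariant Randers metric translates, via the earlier cited result of Deng, into the pair of conditions $\nabla_Y X = 0$ and $ad^*_Y X = 0$, both of which are used implicitly above when we reduce everything to the central condition $[X,Y]=0$.
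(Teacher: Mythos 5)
Your proof is correct and follows the route the paper intends (the paper leaves the corollary as an ``immediate consequence'' of Lemma \ref{vertical lift} and Proposition \ref{Levi-Civita connection}): you combine the criteria of Lemma \ref{vertical lift} with the Berwald condition $\nabla_Y X=0$ and torsion-freeness to reduce everything to $[X,Y]=0$. The converse direction, checking that $X\in z(\frak{g})$ forces $ad_X=ad_X^\ast=0$ and $\nabla_XY=\tfrac{1}{2}[X,Y]=0$, is handled correctly as well.
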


In the following two theorems we give the flag curvature formulas of $F^c$ and $F^v$ explicitly, where $F$ is of Berwald type.

\begin{theorem}\label{flag curvature F^c}
Let $G$ be a Lie group equipped with a left invariant Riemannian metric $g$. Suppose that $F(x,y)=\sqrt{g(y,y)}+g(X(x),y)$ is a Randers metric of Berwald type on $G$. Then for the flag curvature of the left invariant Randers metric $F^c$ on $TG$ we have:
\begin{equation*}
\begin{split}
  & (1) \ \widetilde{P}=span\{Y^c,U^c\}, K^{F^c}(\widetilde{P},Y^c)=\frac{K(U,Y)}{(1+g(X,Y))^2},\\
  & (2) \ \widetilde{P}=span\{Y^c,U^v\}, K^{F^c}(\widetilde{P},Y^c)=\frac{1}{(1+g(X,Y))^2}\{K(U,Y)+\frac{1}{2}g([U,\nabla_YU],Y)\\
  &\hspace*{7cm}-\frac{1}{2}g(\nabla_U ad_U^\ast Y,Y)+\frac{1}{4}g([U,ad_U^\ast Y],Y)\\
  &\hspace*{7cm}-\frac{1}{2}g([[Y,U],U],Y)\},\\
  & (3) \ \widetilde{P}=span\{Y^v,U^c\}, K^{F^c}(\widetilde{P},Y^v)=K(U,Y)+\frac{1}{2}g([Y,\nabla_UY],U)\\
  &\hspace*{7cm}-\frac{1}{2}g(\nabla_Y ad_Y^\ast U,U)+\frac{1}{4}g([Y,ad_Y^\ast U],U)\\
  &\hspace*{7cm}-\frac{1}{2}g([[U,Y],Y],U),\\
  & (4) \ \widetilde{P}=span\{Y^v,U^v\}, K^{F^c}(\widetilde{P},Y^v)=K(U,Y)+g(\nabla_{[U,Y]}Y,U)+\frac{1}{4}\|[U,Y]\|^2,
\end{split}
\end{equation*}
where $K^{F^c}$ and $K$ denote the flag curvature of $F^c$ and the sectional curvature of $g$ respectively, and $\{Y,U\}$ is an orthonormal basis for a subspace $P$ of $\frak{g}$, with respect to $g$.
\end{theorem}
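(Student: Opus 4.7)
The plan is to reduce, via the Berwald property, to a Riemannian-style flag curvature, and then to compute the resulting expression case by case using Proposition \ref{Levi-Civita connection}.

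Since $F$ is of Berwald type, Lemma \ref{complete lift} gives that $F^c$ is also of Berwald type, so its Chern connection coincides with $\widetilde{\nabla}$, the vector field $X^c$ is $\widetilde{\nabla}$-parallel, and the curvature entering the flag-curvature formula is the Riemannian curvature $\widetilde{R}$ of $\widetilde{g}$. In particular $\widetilde{R}(\cdot,\cdot)X^c=0$, hence $\widetilde{g}(\widetilde{R}(\widetilde U,\widetilde Y)\widetilde Y,X^c)=0$. Writing $\widetilde{\beta}(\cdot)=\widetilde{g}(X^c,\cdot)$ and recalling the general Randers formula for the fundamental tensor,
\[ g_y(u,v)=\frac{\widetilde{F}}{\widetilde{\alpha}}\widetilde{g}(u,v)-\frac{\widetilde{\beta}}{\widetilde{\alpha}^3}\widetilde{g}(u,y)\widetilde{g}(v,y)+\frac{\widetilde{g}(u,y)\widetilde{\beta}(v)+\widetilde{\beta}(u)\widetilde{g}(v,y)}{\widetilde{\alpha}}+\widetilde{\beta}(u)\widetilde{\beta}(v), \]
a direct calculation at $y=\widetilde Y$, using that $\widetilde Y$ is a $\widetilde g$-unit vector $\widetilde g$-orthogonal to $\widetilde U$, gives the denominator $g_{\widetilde Y}(\widetilde Y,\widetilde Y)g_{\widetilde Y}(\widetilde U,\widetilde U)-g_{\widetilde Y}(\widetilde Y,\widetilde U)^2=\widetilde{F}(\widetilde Y)^3$. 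The $\widetilde\beta$-terms in the numerator $g_{\widetilde Y}(\widetilde R(\widetilde U,\widetilde Y)\widetilde Y,\widetilde U)$ vanish by the Berwald identity above, so the numerator reduces to $\widetilde F(\widetilde Y)\,\widetilde g(\widetilde R(\widetilde U,\widetilde Y)\widetilde Y,\widetilde U)$. The flag curvature therefore simplifies to
\[ K^{F^c}(\widetilde P,\widetilde Y)=\frac{\widetilde{g}(\widetilde{R}(\widetilde U,\widetilde Y)\widetilde Y,\widetilde U)}{\widetilde F(\widetilde Y)^2}, \]
with $\widetilde F(Y^c)=1+g(X,Y)$ in parts (1)--(2) and $\widetilde F(Y^v)=1$ in parts (3)--(4).

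It now suffices to compute $\widetilde R(\widetilde U,\widetilde Y)\widetilde Y$ in each case. For (1), iterating $\widetilde\nabla_{A^c}B^c=(\nabla_A B)^c$ gives $\widetilde R(U^c,Y^c)Y^c=(R(U,Y)Y)^c$, whence $\widetilde g(\widetilde R(U^c,Y^c)Y^c,U^c)=K(U,Y)$. For (2)--(4) I expand
\[ \widetilde R(\widetilde U,\widetilde Y)\widetilde Y=\widetilde\nabla_{\widetilde U}\widetilde\nabla_{\widetilde Y}\widetilde Y-\widetilde\nabla_{\widetilde Y}\widetilde\nabla_{\widetilde U}\widetilde Y-\widetilde\nabla_{[\widetilde U,\widetilde Y]}\widetilde Y \]
by repeated use of the four lines of Proposition \ref{Levi-Civita connection} together with the Lie-bracket identities \ref{Lie bracket}, which give $[Y^v,U^v]=0$ in (4) and $[U^v,Y^c]=[U,Y]^v$ in (2)--(3). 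After pairing with $\widetilde U$, every $\nabla\nabla$-contribution reassembles into $K(U,Y)$, possibly with an extra $g(\nabla_{[U,Y]}Y,U)$ term; each $ad^*$-correction is rewritten via the duality $g(ad^*_A B,C)=g(B,[A,C])$ and simplified using the fact that $g$ is constant on left-invariant vector fields, which yields identities such as $g(\nabla_Y ad^*_U Y,U)=-g(ad^*_U Y,\nabla_Y U)$. Case (4) is the lightest, because $\widetilde\beta$ vanishes on both $Y^v$ and $U^v$, so the flag curvature is literally the Riemannian sectional curvature of $\widetilde g$ on $\mathrm{span}\{Y^v,U^v\}$, and one reads off $K(U,Y)+g(\nabla_{[U,Y]}Y,U)+\frac{1}{4}\|[U,Y]\|^2$ directly.

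The main obstacle is the algebraic bookkeeping in cases (2) and (3): the raw expansion of $\widetilde R(\widetilde U,\widetilde Y)\widetilde Y$ produces on the order of a dozen terms mixing $\nabla\nabla$, $\nabla ad^*$, $ad^*\nabla$ and $ad^* ad^*$ combinations, each carrying its own sign, and collapsing this sum to the four-term formulas stated in (2) and (3) requires a patient application of the adjoint duality and the left-invariance constancy, together with careful tracking of the $\widetilde g$-orthogonality relations among the various lifts of $X$, $Y$ and $U$. Once this algebra is carried out cleanly, the stated expressions follow.
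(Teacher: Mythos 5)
Your proposal is correct and follows essentially the same route as the paper: use the Berwald property to reduce the flag curvature of $F^c$ to $\widetilde g(\widetilde R(\widetilde U,\widetilde Y)\widetilde Y,\widetilde U)/F^c(\widetilde Y)^2$, and then evaluate the numerator plane by plane from Proposition \ref{Levi-Civita connection}. The only difference is that the paper outsources both steps to citations (the flag-curvature formula of Deng--Hu and Theorem 2.4 of the authors' earlier paper on $\widetilde g$), whereas you derive the reduction from the Randers fundamental tensor and sketch the curvature computation directly; the ``algebraic bookkeeping'' you leave unexecuted in cases (2)--(3) is precisely the content of that cited Theorem 2.4, and your outlined strategy for it is sound.
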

\begin{proof}
Lemma \ref{complete lift} shows that $F^c$ is of Berwald type, thus the Chern connection of $F^c$ and the Levi-Civita connection of $\widetilde{g}$ coincide. On the other hand, we can easily see,

\begin{equation*}
    \frac{\widetilde{g}(Y^c,Y^c)}{F^c(Y^c)^2}=\frac{1}{(1+g(X,Y))^2} \ \ \ \ , \ \ \ \ \frac{\widetilde{g}(Y^v,Y^v)}{F^c(Y^v)^2}=1.
\end{equation*}
Now theorem 2.4 of \cite{Asgari-Salimi1} together with the curvature formula of \cite{Deng4} complete the proof.
\end{proof}

\begin{theorem}\label{flag curvature F^v}
Suppose that $G$ is a Lie group equipped with a left invariant Riemannian metric $g$. Assume that $F(x,y)=\sqrt{g(y,y)}+g(X(x),y)$ is a left invariant Randers metric on $G$ such that $F^v$ is of Berwald type. Then for the flag curvature of the left invariant Randers metric $F^v$ on $TG$ we have:
\begin{equation*}
\begin{split}
  & (1) \ \widetilde{P}=span\{Y^c,U^c\}, K^{F^v}(\widetilde{P},Y^c)=K(U,Y),\\
  & (2) \ \widetilde{P}=span\{Y^c,U^v\}, K^{F^v}(\widetilde{P},Y^c)=K(Y,U)+\frac{1}{2}g([U,\nabla_YU],Y)\\
  &\hspace*{7cm}-\frac{1}{2}g(\nabla_U ad_U^\ast Y,Y)+\frac{1}{4}g([U,ad_U^\ast Y],Y)\\
  &\hspace*{7cm}-\frac{1}{2}g([[Y,U],U],Y)\},\\
  & (3) \ \widetilde{P}=span\{Y^v,U^c\}, K^{F^v}(\widetilde{P},Y^v)=\frac{1}{(1+g(X,Y))^2}\{K(U,Y)+\frac{1}{2}g([Y,\nabla_UY],U)\\
  &\hspace*{7cm}-\frac{1}{2}g(\nabla_Y ad_Y^\ast U,U)+\frac{1}{4}g([Y,ad_Y^\ast U],U)\\
  &\hspace*{7cm}-\frac{1}{2}g([[U,Y],Y],U)\},\\
  & (4) \ \widetilde{P}=span\{Y^v,U^v\}, K^{F^v}(\widetilde{P},Y^v)=\frac{1}{(1+g(X,Y))^2}\{K(U,Y)+g(\nabla_{[U,Y]}Y,U)+\frac{1}{4}\|[U,Y]\|^2\},
\end{split}
\end{equation*}
where $K^{F^v}$ denotes the flag curvature of $F^v$, and the other notations are as the previous theorem.
\end{theorem}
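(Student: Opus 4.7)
The plan is to mirror the proof of Theorem \ref{flag curvature F^c} almost verbatim, exploiting the fact that the underlying Riemannian metric $\widetilde{g}$ on $TG$ is the same for both $F^c$ and $F^v$. The only data that change from one theorem to the other are the normalization factors $\widetilde{g}(y,y)/F(y)^2$ attached to each flagpole, because $F^c$ uses the vector field $X^c$ whereas $F^v$ uses $X^v$.

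First, I would invoke the hypothesis that $F^v$ is of Berwald type, together with the standard fact (quoted at the end of Section~2) that the Chern connection of a Berwald metric coincides with the Levi-Civita connection of the underlying Riemannian metric. Hence the Chern connection of $F^v$ equals $\widetilde{\nabla}$, and the Riemannian curvature tensor $\widetilde{R}$ of $\widetilde{g}$ is also the Finsler curvature tensor of $F^v$. Next, applying the Berwald-Randers flag-curvature formula from \cite{Deng4} to a $\widetilde{g}$-orthonormal pair $\{y,u\}$, one obtains
\begin{equation*}
K^{F^v}(\widetilde{P},y) \;=\; \frac{\widetilde{g}(y,y)}{F^v(y)^{2}}\, K^{\widetilde{g}}(\widetilde{P},y),
\end{equation*}
so the task reduces to computing $\widetilde{g}(y,y)/F^v(y)^{2}$ for each choice of flagpole and then substituting the sectional curvature of $\widetilde{g}$ supplied by Theorem~2.4 of \cite{Asgari-Salimi1}.

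Second, I would compute the two possible normalization factors. Because $\widetilde{g}(X^v,Y^c)=0$ and $\widetilde{g}(X^v,Y^v)=g(X,Y)$, for a $g$-orthonormal pair $\{Y,U\}$ in $\mathfrak{g}$ one has
\begin{equation*}
\frac{\widetilde{g}(Y^c,Y^c)}{F^v(Y^c)^{2}} \;=\; 1,
\qquad
\frac{\widetilde{g}(Y^v,Y^v)}{F^v(Y^v)^{2}} \;=\; \frac{1}{(1+g(X,Y))^{2}}.
\end{equation*}
This immediately explains why cases (1) and (2) of the statement carry no denominator (flagpole is a complete lift), while cases (3) and (4) carry the factor $1/(1+g(X,Y))^{2}$ (flagpole is a vertical lift). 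The curvature numerators $K(U,Y)+\cdots$ appearing in the four cases are the $\widetilde{g}$-sectional curvatures of the four types of planes $\mathrm{span}\{Y^c,U^c\}$, $\mathrm{span}\{Y^c,U^v\}$, $\mathrm{span}\{Y^v,U^c\}$, $\mathrm{span}\{Y^v,U^v\}$, and these have already been computed in Theorem~2.4 of \cite{Asgari-Salimi1} through Proposition~\ref{Levi-Civita connection}; they are exactly the expressions that feature inside the braces of Theorem~\ref{flag curvature F^c}.

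The real work is done by these two references, so the only genuine obstacle is bookkeeping: one must pair the correct sectional curvature of $\widetilde{g}$ with the correct normalization factor in each of the four cases, and verify that the $\widetilde{g}$-orthonormality of $\{y,u\}$ follows from the $g$-orthonormality of $\{Y,U\}$ (which is immediate from the definition \eqref{Lift}). Putting these pieces together yields the four formulas in the statement and completes the proof.
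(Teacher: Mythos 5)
Your proposal is correct and follows essentially the same route as the paper: the authors likewise reduce everything to the observation that $F^v$ Berwald implies the Chern connection is $\widetilde{\nabla}$, compute the two normalization factors $\widetilde{g}(Y^c,Y^c)/F^v(Y^c)^2=1$ and $\widetilde{g}(Y^v,Y^v)/F^v(Y^v)^2=(1+g(X,Y))^{-2}$, and then cite Theorem 2.4 of \cite{Asgari-Salimi1} together with the flag curvature formula of \cite{Deng4}, exactly as in the proof of Theorem \ref{flag curvature F^c}.
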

\begin{proof}
It is sufficient to note that
\begin{equation*}
    \frac{\widetilde{g}(Y^v,Y^v)}{F^v(Y^v)^2}=\frac{1}{(1+g(X,Y))^2} \ \ \ \ , \ \ \ \ \frac{\widetilde{g}(Y^c,Y^c)}{F^v(Y^c)^2}=1.
\end{equation*}
The other parts of proof are similar to the previous theorem so we omit it.
\end{proof}
\begin{example}\label{five-dimensional}
As an example we can consider simply connected two-steps nilpotent Lie groups of dimension five with three-dimensional center. In \cite{Salimi3}, the second author has shown that a simply connected two-steps nilpotent Lie group admits a left invariant Randers metric of Berwald type if and only if it has three-dimensional center. In this case, it is shown that $X\in z(\frak{g})$. Therefore, if $G$ is a simply connected two-steps nilpotent Lie group of dimension five with three-dimensional center then by using lemma \ref{complete lift} and corollary \ref{vertical lift with Berwald condition}, the ten-dimensional Lie group $TG$ admits left invariant Randers metrics $F^v$ and $F^c$ of Berwald type.
\end{example}
\begin{remark}
By using formula 5.3 of \cite{Salimi3} and the above theorems, we can see the flag curvatures of the Finsler metrics $F^v$ and $F^c$ on the Lie group $TG$ of example \ref{five-dimensional} admit negative, positive and zero values.
\end{remark}


\section{\textbf{Randers metrics of Berwald type on the tangent Bundles of 3-dimensional lie groups}}

In \cite{Salimi2}, the second author have shown that the only simply connected three dimensional real Lie groups which admit Randers metrics of Berwald type, are as table \ref{three dimensional Lie groups}.

\fontsize{8}{0}{\selectfont
\begin{table}[h!]
    \centering\caption{}\label{three dimensional Lie groups}
        \begin{tabular}{|p{0.5cm}|p{2cm}|p{1.7cm}|p{2cm}|p{1.3cm}|p{3cm}|p{2.5cm}|}
        \hline
            case & Lie algebra structure & Associated  simply connected  Lie group & Left invariant Riemannian metric $g$ & Conditions for  parameters of $g$ & $X$, where $F(x,y)=\sqrt{g_x(y,y)}+g(X(x),y)$ & Conditions for  parameters of $X$ \\
             \hline
           case 1 & $[W,Y]=0,$ $[W,Z]=0,$ $[Y,Z]=0$ & $\Bbb{R}^3$ & $\left(
                                                                         \begin{array}{ccc}
                                                                           1 & 0 & 0 \\
                                                                           0 & 1 & 0 \\
                                                                           0 & 0 & 1 \\
                                                                         \end{array}
                                                                       \right)$ & - & $X=pW+qY+rZ$ &  $\sqrt{p^2+q^2+r^2}<1$ \\
            \hline
           case 2 & $[W,Y]=0,$ $[W,Z]=-Y,$ $[Y,Z]=-2Y$ & The nonunimodular group $G_0$ & $\left(
                                                                         \begin{array}{ccc}
                                                                           1 & \frac{1}{2} & 0 \\
                                                                           \frac{1}{2} & 1 & 0 \\
                                                                           0 & 0 & \nu \\
                                                                         \end{array}
                                                                       \right)$ & $\nu>0$ & $X=-2pW+pY$ &  $|p|<\frac{\sqrt{3}}{3}$ \\
            \hline
           case 3 & $[W,Y]=0,$ $[W,Z]=Y,$ $[Y,Z]=-W$ & The solvable Lie group $\widetilde{E_0}(2)$ & $\left(
                                                                         \begin{array}{ccc}
                                                                           1 & 0 & 0 \\
                                                                           0 & 1 & 0 \\
                                                                           0 & 0 & \nu \\
                                                                         \end{array}
                                                                       \right)$ & $\nu>0$ & $X=pZ$ &  $|p|<\frac{1}{\sqrt{\nu}}$ \\
            \hline
        \end{tabular}
\end{table}
}
\fontsize{11}{0}
\large\normalsize
\spacing{1.14}

Now we study the existence of left invariant Berwaldian Randers metrics of the forms $F^c$ and $F^v$ on the tangent bundle of simply connected three dimensional real Lie groups.
\begin{theorem}
Suppose that $F(x,y)=\sqrt{g_x(y,y)}+g(X(x),y)$ is any left invariant Randers metric on an arbitrary simply connected three-dimensional real Lie group $G$. Then the six-dimensional Lie group $TG$ admits a left invariant Randers metric of Berwald type of the form $F^c$ if and only if $G$ is one of the Lie groups described in table \ref{three dimensional Lie groups}.
\end{theorem}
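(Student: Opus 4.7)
The plan is to observe that this theorem is essentially an immediate corollary of Lemma \ref{complete lift} combined with the classification of three-dimensional Lie groups admitting Berwaldian Randers metrics established in \cite{Salimi2} (reproduced as Table \ref{three dimensional Lie groups}). I would prove both implications in turn, with the real work having already been packaged into the earlier lemma.

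For the ``only if'' direction, suppose $TG$ admits a left invariant Randers metric $F^c$ of Berwald type, where $F^c$ is constructed from a left invariant Randers metric $F=\sqrt{g(y,y)}+g(X,y)$ on $G$ via formula (\ref{F^c}). Lemma \ref{complete lift} asserts that $F^c$ is of Berwald type if and only if $F$ is of Berwald type, hence the underlying Randers metric $F$ on the three-dimensional Lie group $G$ must itself be of Berwald type. Invoking the classification from \cite{Salimi2}, the simply connected three-dimensional real Lie groups admitting a left invariant Berwaldian Randers metric are precisely $\mathbb{R}^3$, the nonunimodular group $G_0$, and the solvable group $\widetilde{E_0}(2)$, i.e.\ the entries of Table \ref{three dimensional Lie groups}.

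For the ``if'' direction, assume $G$ is one of the three Lie groups listed in Table \ref{three dimensional Lie groups}. Equip $G$ with the corresponding left invariant Riemannian metric $g$ and left invariant vector field $X$ satisfying the tabulated parameter constraints; then $F(x,y)=\sqrt{g_x(y,y)}+g(X(x),y)$ is, by \cite{Salimi2}, a left invariant Randers metric of Berwald type on $G$. Lift $g$ to $\widetilde g$ on $TG$ via (\ref{Lift}) and define $F^c$ by (\ref{F^c}); the identity $\|X^c\|_{\widetilde g}=\|X\|_g<1$ recorded in the excerpt guarantees that $F^c$ is a genuine (left invariant) Randers metric. The ``if'' direction of Lemma \ref{complete lift} then yields that $F^c$ is of Berwald type on $TG$.

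There is no substantive obstacle in this argument: Lemma \ref{complete lift} (whose proof relies on Proposition \ref{Levi-Civita connection} together with Theorem~3.1 of \cite{Deng3}) and the classification from \cite{Salimi2} together do all of the work. The only small point worth noting explicitly is the transfer of the admissibility condition $\|X\|_g<1$ to $\|X^c\|_{\widetilde g}<1$, which is already verified in the paragraph immediately following the definitions of $F^c$ and $F^v$.
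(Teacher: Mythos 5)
Your proposal is correct and follows exactly the paper's route: the paper's proof is simply the one-line observation that Theorem~2.2 of \cite{Salimi2} (the classification in Table~\ref{three dimensional Lie groups}) combined with Lemma~\ref{complete lift} gives both implications. You have merely spelled out the two directions and the minor admissibility check $\|X^c\|_{\widetilde g}=\|X\|_g<1$, which is consistent with the text.
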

\begin{proof}
It is sufficient to use theorem 2.2 of \cite{Salimi2} and lemma \ref{complete lift}.
\end{proof}

\begin{theorem}
Let $F(x,y)=\sqrt{g_x(y,y)}+g(X(x),y)$ be an arbitrary left invariant Randers metric of Berwald type on any simply connected three-dimensional real Lie group $G$. Then the six-dimensional Lie group $TG$ admits a left invariant Randers metric of Berwald type of the form $F^v$ if and only if $G$ is one of the cases 1 and 2 described in table \ref{three dimensional Lie groups}.
\end{theorem}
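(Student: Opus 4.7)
The plan is to reduce the statement to a center-membership check for the defining vector field $X$, and then verify this check for each of the three Lie groups listed in Table \ref{three dimensional Lie groups}. The reduction is supplied by Corollary \ref{vertical lift with Berwald condition}: once we know that $F$ is already of Berwald type, the lifted metric $F^v$ is of Berwald type on $TG$ if and only if $X\in z(\mathfrak{g})$. Combined with Theorem 2.2 of \cite{Salimi2} (which says that the pairs $(g,X)$ appearing in the table exhaust all left invariant Berwald Randers structures on simply connected $3$-dimensional real Lie groups), this reduces the theorem to deciding, in each of the three rows, whether the tabulated $X$ lies in the center of the corresponding $3$-dimensional Lie algebra.

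I would then run through the three cases. In case~1 the Lie algebra is abelian, so $z(\mathfrak{g})=\mathfrak{g}$ and every admissible $X=pW+qY+rZ$ is automatically central; thus $TG=T\mathbb{R}^3$ carries the Berwald lift $F^v$. In case~2, with brackets $[W,Y]=0$, $[W,Z]=-Y$, $[Y,Z]=-2Y$ and $X=-2pW+pY$, I would directly compute $[X,W]$, $[X,Y]$, $[X,Z]$ using bilinearity; the only nontrivial line is
\begin{equation*}
[X,Z]=-2p[W,Z]+p[Y,Z]=-2p(-Y)+p(-2Y)=2pY-2pY=0,
\end{equation*}
so $X\in z(\mathfrak{g})$ and the lift $F^v$ is again Berwald. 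In case~3, with $[W,Y]=0$, $[W,Z]=Y$, $[Y,Z]=-W$ and $X=pZ$, one computes $[X,W]=p[Z,W]=-pY$, which is nonzero because the Randers requirement $|p|<1/\sqrt{\nu}$ does not force $p=0$ and the metric would degenerate if it did; hence $X\notin z(\mathfrak{g})$ and Corollary \ref{vertical lift with Berwald condition} rules out a Berwald lift of the form $F^v$.

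Putting these three verifications together gives both directions of the ``if and only if.'' For the ``if'' direction, cases~1 and 2 produce $X\in z(\mathfrak{g})$, so $F^v$ is Berwald by the corollary. For the ``only if'' direction, Theorem 2.2 of \cite{Salimi2} already restricts $G$ to one of the three listed groups (since $F$ is assumed Berwald to begin with), and case~3 has just been excluded.

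The only real point of care, and thus the ``main obstacle,'' is the centrality check in case~2: the vector $X$ has a nontrivial $W$-component and a nontrivial $Y$-component, and it is only the specific coefficient ratio $-2:1$ dictated by the table that makes the two contributions to $[X,Z]$ cancel. Everything else is a direct application of the earlier corollary together with the classification in \cite{Salimi2}.
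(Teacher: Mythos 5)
Your proposal is correct and follows essentially the same route as the paper: the paper's proof is exactly the combination of Theorem 2.2 of \cite{Salimi2} with Corollary \ref{vertical lift with Berwald condition}, and your explicit bracket computations (in particular the cancellation $[X,Z]=-2p(-Y)+p(-2Y)=0$ in case 2 and $[X,W]=-pY\neq 0$ in case 3) simply supply the case-by-case centrality check that the paper leaves implicit.
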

\begin{proof}
Theorem 2.2 of \cite{Salimi2} together with corollary \ref{vertical lift with Berwald condition} complete the proof.
\end{proof}
Now we give some results about geodesic vectors.
\begin{theorem}
Let $g$ be a left invariant Riemannian metric on an arbitrary Lie group and $\frak{g}$ be the Lie algebra of $G$. Suppose that $F$ is a left invariant Randers metric of Douglas type defined by $g$ and a left invariant vector field $X$. Then, $U\in\frak{g}\backslash\{0\}$ is a geodesic vector of $(G,F)$ if and only if $U$ is a geodesic vector of $(G,g)$.
\end{theorem}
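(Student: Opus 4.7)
The plan is to reduce both notions of geodesic vector to explicit algebraic conditions on $\mathfrak{g}$ and then to show that the Douglas hypothesis makes these conditions identical. Concretely, I would pass through Latifi's criterion for the Finsler side, the standard characterization of Douglas-type Randers metrics, and Kostant's criterion for the Riemannian side.

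First I would invoke the criterion for geodesic vectors on a Lie group with a left invariant Finsler metric: $U \in \mathfrak{g}\setminus\{0\}$ is a geodesic vector of $(G, F)$ if and only if $g_U(U, [U, V]) = 0$ for every $V \in \mathfrak{g}$, where $g_U$ denotes the fundamental tensor of $F$ at $U$. The Riemannian analogue (Kostant) says that $U$ is a geodesic vector of $(G, g)$ if and only if $g(U, [U, V]) = 0$ for every $V \in \mathfrak{g}$. Thus the theorem amounts to showing that, under the Douglas hypothesis, these two algebraic conditions coincide.

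Next I would specialize $g_U$ to the Randers metric $F(y) = \sqrt{g(y,y)} + g(X, y)$. Using the standard formula for the Randers fundamental tensor and evaluating at $y = U$ with $W = [U, V]$, the mixed and anisotropic pieces combine so that
$$g_U(U, [U, V]) = \frac{F(U)}{\sqrt{g(U, U)}}\Bigl[\,g(U, [U, V]) + \sqrt{g(U, U)}\, g(X, [U, V])\,\Bigr].$$
Since $F(U) > 0$ on $\mathfrak{g}\setminus\{0\}$, the Finslerian geodesic vector condition becomes $g(U, [U, V]) + \sqrt{g(U, U)}\, g(X, [U, V]) = 0$ for every $V \in \mathfrak{g}$.

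Finally, I would use the characterization (B\'acs\'o--Matsumoto) that a Randers metric $F = \alpha + \beta$ is of Douglas type if and only if the $1$-form $\beta$ is closed. For the left invariant form $\beta(Y) = g(X, Y)$ the Maurer--Cartan formula gives $d\beta(U, V) = -\beta([U, V])$, so closedness is equivalent to $g(X, [U, V]) = 0$ for all $U, V \in \mathfrak{g}$. Substituting this into the previous display collapses the Finslerian condition exactly to the Riemannian one, giving the equivalence. I do not anticipate serious obstacles; the only delicate step is the cancellation in the simplification of $g_U(U, [U, V])$, where the contribution of the $y_i y_j$ and $\beta$ terms in the Randers fundamental tensor must reproduce precisely $g(U, [U, V])$ at $y = U$, and this is a short direct verification.
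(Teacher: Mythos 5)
Your proposal is correct, and the computation of $g_U(U,[U,V])$ checks out: with $y=u=U$ the purely Riemannian correction term $F(U)\bigl(\tfrac{g(U,W)}{\alpha}-\tfrac{g(U,U)g(U,W)}{\alpha^3}\bigr)$ vanishes, leaving exactly the factorization you display, and positivity of $F(U)$ plus the closedness of $\beta$ (equivalently $g(X,[\cdot,\cdot])=0$ for a left invariant form) collapses Latifi's criterion to Kostant's. The paper reaches the same conclusion by a shorter route: it views $F$ as an $(\alpha,\beta)$-metric with $\phi(s)=1+s$, observes $\phi''\equiv 0$, notes that the Douglas hypothesis gives $g(X,[Y,Z])=0$ for all $Y,Z\in\frak{g}$ (citing Deng--Hu), and then simply invokes Theorem 2.3 of Parhizkar--Salimi Moghaddam, which handles geodesic vectors of general invariant $(\alpha,\beta)$-metrics under exactly these two conditions. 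So the underlying mechanism is identical --- the Douglas condition is what kills the extra term in the geodesic-vector equation --- but your argument is self-contained and works directly with the Randers fundamental tensor, whereas the paper outsources that computation to a cited general theorem. Your version has the advantage of being verifiable without consulting the reference; the paper's has the advantage of exhibiting the result as a special case of a statement valid for all $(\alpha,\beta)$-metrics with $\phi''=0$. Both are sound.
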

\begin{proof}
If we consider a Randers metric as a $(\alpha,\beta)$-metric, we have $\phi(s)=1+s$. So, for any Randers metric we have $\phi''(s)=0$, for all $s\in\Bbb{R}$. On the other hand, since we have considered $F$ is of Douglas type, we have $g(X,[Y,Z])=0$, for any $Y,Z\in\frak{g}$ (see \cite{Deng4}). Therefore, $F$ satisfies in the conditions of theorem 2.3 of \cite{Parhizkar-Salimi}. The last expression competes the proof.
\end{proof}
\begin{example}
The previous theorem shows that for geodesic vectors of Randers metrics of non-trivial cases 2 and 3 of table \ref{three dimensional Lie groups} we have:
\begin{description}
  \item[case 2] $U$ is a geodesic vector if and only if $U=aW-\frac{a}{2}Y+cZ$ or $U=aW+\frac{a}{2}Y$, where $a, c \in\Bbb{R}$.
  \item[case 3] $U$ is a geodesic vector if and only if $U=cZ$ or $U=aW+bY$, where $a, b, c \in\Bbb{R}$.
\end{description}
\end{example}
\begin{cor}
By using the previous theorem we can easily see in the cases of lemmas \ref{complete lift} and \ref{vertical lift}, $U^c$ (or $U^v$) is a geodesic vector if and only if $U$ is a geodesic vector of $(G,g)$.
\end{cor}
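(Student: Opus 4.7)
The plan is to reduce the corollary to the previous theorem applied in the Lie group $TG$, and then to compute the relevant Levi-Civita connection terms using Proposition \ref{Levi-Civita connection}. The assertion is really two statements, one for $U^c$ (in the setting of Lemma \ref{complete lift}) and one for $U^v$ (in the setting of Lemma \ref{vertical lift}); I would treat them in parallel.

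First I would observe that in both lemmas the lifted Randers metric ($F^c$ or $F^v$) is of Berwald type, and every Berwald metric is in particular a Douglas metric. Hence the previous theorem, applied to the Lie group $TG$ equipped with the left invariant Riemannian metric $\widetilde{g}$ and the left invariant vector field $X^c$ (resp.\ $X^v$), implies that $U^c$ (resp.\ $U^v$) is a geodesic vector of $(TG, F^c)$ (resp.\ $(TG, F^v)$) if and only if it is a geodesic vector of $(TG, \widetilde{g})$. This reduces everything to a Riemannian statement on $TG$.

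Next I would recall that for a left invariant Riemannian metric, a nonzero element $V$ of the Lie algebra is a geodesic vector precisely when $\widetilde{\nabla}_V V = 0$. Using Proposition \ref{Levi-Civita connection} with $X=Y=U$, a direct substitution gives
\begin{equation*}
\widetilde{\nabla}_{U^c} U^c = (\nabla_U U)^c, \qquad \widetilde{\nabla}_{U^v} U^v = \bigl(\nabla_U U - \tfrac{1}{2}[U,U]\bigr)^c = (\nabla_U U)^c,
\end{equation*}
since $[U,U]=0$. Because the complete lift $Z \mapsto Z^c$ is injective on $\mathfrak{g}$, each of these vanishes if and only if $\nabla_U U = 0$, i.e.\ if and only if $U$ is a geodesic vector of $(G,g)$. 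Combining this with the reduction from the first step yields the claim in both cases.

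There is no real obstacle here: the only thing to watch is that the previous theorem is applicable, which requires the Berwald hypothesis of Lemmas \ref{complete lift} and \ref{vertical lift} to imply the Douglas condition on $TG$, and that the Levi-Civita connection of $\widetilde{g}$ behaves as described in Proposition \ref{Levi-Civita connection}. Both points are immediate from the material already established earlier in the paper.
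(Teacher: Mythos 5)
Your proposal is correct and follows exactly the route the paper intends: apply the preceding theorem on $TG$ (legitimate since Berwald implies Douglas), and then reduce geodesic vectors of $(TG,\widetilde{g})$ to those of $(G,g)$ via Proposition \ref{Levi-Civita connection}, which gives $\widetilde{\nabla}_{U^c}U^c=(\nabla_UU)^c$ and $\widetilde{\nabla}_{U^v}U^v=(\nabla_UU)^c$. The paper states this corollary without writing out these steps, and your write-up supplies precisely the details it leaves implicit.
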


{\large{\textbf{Acknowledgment.}}} We are grateful to the office of Graduate Studies of the University of Isfahan for their support. This research was supported by the Center of Excellence for
Mathematics at the University of Isfahan.


\bibliographystyle{amsplain}

\end{document}